\newcommand{\lbr}[1]
{
\left[ #1  \right]_{\mathcal{L}}
}
\newcommand{\lpkbr}[1]
{
\left[ #1  \right]_{\mathcal{L}}'
}
\newcommand{\lskbr}[1]
{
{\left[ #1  \right]_{\mathcal{L}}}''
}
\newcommand{\kbr}[1]
{
\left\langle #1  \right\rangle
}
\newcommand{\jbr}[1]
{
\mathcal{X}\left( #1  \right)}
\newcommand{\kupbr}[1]
{
{\left|#1 \right\rangle_{A_2}}
}
\newcommand{\moybr}[1]
{
\left\langle #1  \right\rangle_{n}
}
\newcommand{\moypbr}[1]
{
P_n\left( #1  \right)
}
\newcommand{\arbr}[1]
{
\mathcal{A}\left[ #1  \right]
}
\newcommand{\narbr}[1]
{
\mathcal{W}\left[ #1  \right]
}
\newcommand{\unknot}
{
\mathcal{O}}
\tikzstyle arrowstyle=[scale=1]
\tikzstyle directed=[postaction={decorate,decoration={markings,
    mark=at position .6 with {\arrow[arrowstyle]{angle 90}}}}]
\tikzstyle reverse directed=[postaction={decorate,decoration={markings,
    mark=at position .4 with {\arrowreversed[arrowstyle]{angle 90};}}}]
    \tikzstyle final directed=[postaction={decorate,decoration={markings,
    mark=at position .98 with {\arrow[arrowstyle]{angle 90}}}}]
\tikzstyle reverse final directed=[postaction={decorate,decoration={markings,
    mark=at position .98 with {\arrowreversed[arrowstyle]{angle 90};}}}]
    \tikzstyle double directed=[postaction={decorate,decoration={
    markings,
    mark=at position .60 with {\arrow[arrowstyle]{angle 90}};,
    mark=at position 0.95 with {\arrow[arrowstyle]{angle 90}};}}]
\tikzstyle double reverse directed=[postaction={decorate,decoration={markings,
    mark=at position .60 with {\arrowreversed[arrowstyle]{angle 90}};,
    mark=at position .95 with {\arrowreversed[arrowstyle]{angle 90}};}}]
\newcommand{\Vone}[1]
{

}
\newcommand{\insfigure}[3]
{
\begin{figure}
\center{\includegraphics[width=#1]{#2}}
\caption{#3}
\label{F_#2}
\end{figure}
}
\newcommand{\insfigureasformula}[4]
{
\begin{figure}[#1]
\center{#2}
\caption{#3}
\label{#4}
\end{figure}
}
\newtheorem{teo}{Theorem}
\begin{document}


\begin{flushleft}
AMS MSC: 57M25, 57M27
\end{flushleft}

\begin{center}
Labels instead of coefficients: a label bracket $\lbr{\cdot}$ which dominates 
   the~Jones polynomial $\jbr{\cdot}$, 
 the Kuperberg bracket $\kupbr{\cdot}$, and the normalised arrow polynomial~$\narbr{\cdot}$
\vspace*{30pt}

A.A.Akimova$^{1}$, V.O.Manturov$^{2}$

$^{1}$ South Ural State University

$^{2}$ Bauman Moscow State Technical University

and Novosibirsk State University
\end{center}

\footnotesize 
\begin{center}
\emph{Abstract}
\end{center}
In the present paper, we develop a picture formalism which gives rise to an invariant that dominates several known invariants of classical and virtual knots: the Jones polynomial $\jbr{\cdot}$, 
 the Kuperberg bracket $\kupbr{\cdot}$, and the normalised arrow polynomial~$\narbr{\cdot}$.

\emph{Keywords:} knot, invariant, picture-valued, skein relation, Kauffman bracket, \linebreak Kuperberg bracket, arrow polynomial.

\normalsize

\section{Introduction}

Many knot invariants originate from skein relations and similar relations. Usually, the strategy is as follows. One
resolves a crossing in two or more possible ways with certain coefficients and then evaluates
the resulting graph (or collection of circles) according to the given set of rules. The coefficients
are fixed polynomials. In the present paper, we suggest the method of using {\em labels instead
of coefficients} which allows one to get more degrees of freedom. Suppose we want to have
a coefficient $a^{2}$ (say) for a given diagram. Instead of putting $a^{2}$ before the whole
diagram, we just put two dots on (one or two) components of the graph each meaning $a$;
certainly, we may always restore $a^{2}$ from these two dots, but rather, we can settle more
complicated relations for taking factors (say, let one multiply labels only if they belong to the
same connected component). Starting with the usual Kauffman bracket skein relation,
putting labels and arcs connecting them we are led to a formalism without any coefficients
which dominates various known brackets (the 
  Jones polynomial~$\jbr{\cdot}$\cite{Jones}, 
  the Kuperberg bracket $\kupbr{\cdot}$~\cite{1991Kuperberg}, and the normalised arrow polynomial~$\narbr{\cdot}$~\cite{Kauffman}).

The relations in their most general form are made complicated on purpose.
Formally, the obtained invariant is an equivalence of pictures modulo relations.
We do not know whether it gives rise to a picture-valued invariant of classical
knots in the sense of
\cite{ManturovIlyutko}. 


Our paper is organised as follows. In Section~\ref{S_Def}, we define our invariant called the label bracket $\lbr{\cdot}$ and prove its
invariance.

In Section~\ref{S_part_cases}, we prove that our polynomial~$\lbr{\cdot}$ dominates the 
  Jones polynomial~$\jbr{\cdot}$\cite{Jones} 
  and the Kuperberg bracket $\kupbr{\cdot}$ \cite{1991Kuperberg}.

Sections~\ref{S_VK} and~\ref{S_knotoids} are devoted to virtual knots and knotoids, respectively. We prove
that our invariant  $\lbr{\cdot}$ gives rise to pictures. 
In addition,  Section~\ref{S_VKar} shows that  the normalised arrow polynomial~$\narbr{\cdot}$~\cite{Kauffman} can be considered as a particular case of the label bracket~$\lbr{\cdot}$.


The first named author was supported by the RFBR grant No.~17-01-00690. The second named author was supported by the Laboratory of Topology and Dynamics, Novosibirsk State University (grant No. 14.Y26.31.0025 of the government of the Russian Federation).

We express sincere gratitude to D.A.Fedoseev for various remarks.

\section{The label bracket $\lbr{\cdot}$ for classical knots}\label{S_Def}

By a \emph{classical label graph}  we mean a trivalent planar connected graph having $2\cdot n$
vertices of types given in  Fig.~\ref{F_Vertices.eps}, where $n\in \mathbb{Z}^{+}$.

\insfigureasformula{b}{$\begin{array}{cccccccc}
   \Vone{0.33} &  \Vtwo{0.33} &  \Vthree{0.33} & \Vfour{0.33} &
    \Vfive{0.33} & \Vsix{0.33} &  \Vseven{0.33} &  \Veight{0.33}\\
    &&&&&&&\\
  (V.1) & (V.2) & (V.3) & (V.4) & (V.5) & (V.6) & (V.7) & (V.8)
\end{array}$}{Types $(V.1)$ -- $(V.8)$ of vertices of  a classical label graph
}{F_Vertices.eps}


Namely, each vertex  is denoted by an empty or solid circle, see types $(V.1)$, $(V.3)$, $(V.5)$, $(V.7)$ and $(V.2)$, $(V.4)$, $(V.6)$, $(V.8)$, respectively. Among three edges that are incident to any vertex, there is a single edge denoted by thin line, while both others edges are denoted by thick line.  All edges are oriented except for thin edges that are adjacent to thick edges with coherent orientation, see types $(V.1)$ -- $(V.4)$.   
In-degree at each vertex of types $(V.5)$ and $(V.6)$ is   0, while in-degree at each vertex of types $(V.7)$ and $(V.8)$ is   3.

 Denote by $C(G)$ 
 a module over $\mathbb{Z}$ generated by classical label graphs  
  modulo relations
  \begin{equation}\label{Eq_lb_rel}
         R_{1.1}\mbox{,~} R_{1.2}\mbox{,~} R_{2.1} \mbox{~--~}  R_{2.4}\mbox{, and }R_{3.1}
           \end{equation}
           given in Figs.~\ref{F_Rules_R1.eps}--~\ref{F_R3givesRules.eps}.


\insfigureasformula{p}{$$
R_{1.1}: \lbr{\RFApos{0.7}+\RFBpos{0.7}} = \lbr{\RFempty{0.7}}
$$
$$
R_{1.2}: \lbr{\RFBneg{0.7}+\RFAneg{0.7}} = \lbr{\RFempty{0.7}}
$$}{Relations $R_{1.1}$ and $R_{1.2}$ of the label bracket $\lbr{\cdot}$}{F_Rules_R1.eps}

\insfigureasformula{p}{  \mbox{\hspace*{0cm}$
R_{2.1}: \lbr{\RSBAcoh{0.4}+\RSABcoh{0.4}+\RSBBcoh{0.4}+\RSAAcoh{0.4}} = \lbr{\RSemptycoh{0.4}}
$}
 \mbox{\hspace*{0cm}$
R_{2.2}: \lbr{\RSABncoh{0.4}+\RSBAncoh{0.4}+\RSAAncoh{0.4}+\RSBBncoh{0.4}} = \lbr{\RSemptycoh{0.4}}
$}
\mbox{\hspace*{0cm}$
R_{2.3}: \lbr{\RSBArever{0.4}+\RSABrever{0.4}+\RSBBrever{0.4}+\RSAArever{0.4}} = \lbr{\RSemptyrev{0.4}}
$}
\mbox{\hspace*{0cm}$
R_{2.4}: \lbr{\RSABrev{0.4}+\RSBArev{0.4}+\RSAArev{0.4}+\RSBBrev{0.4}} = \lbr{\RSemptyrev{0.4}}
$}}{Relations $R_{2.1}$ -- $R_{2.4}$ of the label bracket $\lbr{\cdot}$}{F_}

\insfigureasformula{p}{\mbox{\hspace*{0cm}$\begin{array}{c}
 R_{3.1}:\\  \lbr{\RTAABleft{0.35}+\RTBAAleft{0.35}+\RTBBBleft{0.35}+\RTBABleft{0.35}}+ \\
  \lbr{\RTABAleft{0.35}+\RTBBAleft{0.35}+\RTABBleft{0.35}+\RTAAAleft{0.35}}= \\
\end{array}$} \mbox{\hspace*{0cm}$\begin{array}{c}
   \lbr{\RTAABright{0.35}+\RTABAright{0.35}+\RTBBBright{0.35}+\RTABBright{0.35}}+ \\
  \lbr{\RTBAAright{0.35}+\RTAAAright{0.35} +\RTBABright{0.35}+\RTBBAright{0.35}} ~~~\\
\end{array}$}}{Relation $R_{3.1}$  of the label bracket $\lbr{\cdot}$}{F_R3givesRules.eps}


 Let $D$ be an oriented classical knot diagram having $n$ crossings.

Throughout the paper, by \emph{smoothing of  classical crossings }\label{D_sccr} of the diagram~$D$ we mean the following.
  Replace each classical crossing of $D$ with a sum of two fragments as follows.  Provide each angle of the crossing  with a marker $A$ or $B$ according to the relations given in Fig.~\ref{F_Rules_s12.eps} on the left, and smooth the crossing in the following two ways. First,  join together the two angles endowed with the marker $A$, and connect the pair of obtained arcs by a thin segment with endpoints marked as empty circles. Second,  join together two angles endowed with the marker $B$, and connect the pair of obtained arcs by a thin segment  with endpoints marked as solid circles.  Finally,  orient the segment connecting the arcs with not coherent orientation in a way such that the in-degree at each vertex is either 0 or 3.

 \insfigureasformula{t}{$$
R_{S.1}:\lbr{\RScrpos{0.6}}=\lbr{\RSApos{0.6}+\RSBpos{0.6}}
$$
$$
R_{S.2}: \lbr{\RScrneg{0.6}}=\lbr{\RSAneg{0.6}+\RSBneg{0.6}}
$$}{Smoothing relations $R_{S.1}$ and $R_{S.2}$  of the label bracket $\lbr{\cdot}$}{F_Rules_s12.eps}


Throughout the paper, each \emph{state}~\label{D_state} $s$ of the diagram $D$ is defined by a combination of ways to smooth
    classical crossings of $D$ such as to join together either  two angles endowed with a marker $A$, or two angles endowed with a marker $B$.


      The \emph{label bracket~$\lbr{D}$ of  an oriented classical knot  diagram $D$} is the sum

      \begin{equation}\label{Eq_lb_def}
     \sum\limits_{s=1}^{2^n} G_s (D)
        \end{equation}
        modulo relations~\eqref{Eq_lb_rel},  i.e.  the label bracket $\lbr{D}$ takes
values in $C(G)$. 

The sum~\eqref{Eq_lb_def} is taken over all possible states of the~diagram~$D$. Here $s$ is a state of the diagram $D$, and $G_s(D)$ is a classical label graph, which is obtained as a result of smoothing each crossing of $D$ according to the state $s$ by the  smoothing relations $R_{S.1}$ and $R_{S.2}$ given in Fig.~\ref{F_Rules_s12.eps}.



         \begin{teo}\label{Th_inv}
     The label bracket $\lbr{\cdot}$ is  invariant under isotopy of  classical links.
     \end{teo}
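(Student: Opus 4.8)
The plan is to prove the theorem via Reidemeister's theorem: two diagrams present isotopic classical links precisely when they are related by a finite sequence of planar isotopies and the three Reidemeister moves $R1$, $R2$, $R3$. Planar isotopy manifestly preserves the sum~\eqref{Eq_lb_def} (it permutes nothing and changes no state), so it suffices to show that $\lbr{D}$, as an element of $C(G)$, is unchanged under each of the three moves. The entire construction is engineered so that the three families of relations defining $C(G)$ match the three moves: the relations $R_{1.1},R_{1.2}$ should account for $R1$, the relations $R_{2.1}$--$R_{2.4}$ for $R2$, and $R_{3.1}$ for $R3$.

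The key structural step is a locality (factorisation) argument. Suppose $D$ and $D'$ differ by a move performed inside a disk $\Delta$, so that $D$ and $D'$ agree outside $\Delta$. The crossings of $D$ lying outside $\Delta$ are smoothed identically in $D$ and $D'$; fixing one such state $\sigma$ of the exterior crossings, the remaining part of the sum~\eqref{Eq_lb_def} runs only over smoothings of the crossings inside $\Delta$, and all resulting label graphs coincide outside $\Delta$. Thus for each move it is enough to establish a local identity between the two partial sums obtained by smoothing only the crossings inside $\Delta$, an identity that must hold in $C(G)$ after gluing in an arbitrary fixed exterior. First I would record that the defining relations $R_{1.1}$--$R_{3.1}$ are exactly such local identities, with the dashed circle marking $\partial\Delta$ and the drawn boundary data fixed, so that applying them inside $\Delta$ is legitimate regardless of the surroundings.

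With this in hand each move becomes a direct match. For $R1$, I would smooth the single crossing of a kink using $R_{S.1}$ or $R_{S.2}$; the two resulting fragments (one carrying a small loop, one carrying a thin segment between two circle-endpoints) are precisely the two summands on the left of $R_{1.1}$ (respectively $R_{1.2}$), whose value is the straight-strand picture on the right, so $\lbr{D}=\lbr{D'}$. For $R2$, smoothing the two crossings yields $2^2=4$ label graphs; according to the relative orientation of the two strands these four graphs form exactly the left-hand side of one of $R_{2.1}$--$R_{2.4}$, each of which equals the picture with two crossingless arcs, giving the equality. For $R3$, smoothing the three crossings on each side of the move produces $2^3=8$ graphs on the left and $8$ on the right, and these are precisely the two sides of $R_{3.1}$; to keep the number of oriented configurations finite I would first reduce, using $R2$-invariance together with planar isotopy, to a single standard orientation of the $R3$ move, so that the one relation $R_{3.1}$ suffices.

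The hard part will be the bookkeeping of orientations and of the thin segments, which is where genuine care is required. For every smoothing one must check that the orientation rule (orient the connecting thin segment so that the in-degree at each trivalent vertex is $0$ or $3$, i.e.\ the vertex is of one of the admissible types $(V.1)$--$(V.8)$) produces exactly the decorated fragments drawn in the relation figures, and that these choices remain consistent when the local piece is glued to the fixed exterior. In particular I would verify, move by move, that the empty/solid circle decorations and the segment orientations on the smoothed crossings inside $\Delta$ literally coincide with those in $R_{1.1}$--$R_{3.1}$, and that no ambiguity is introduced by the connected-component conventions governing the labels. Once this matching is confirmed in every case, invariance under each Reidemeister move follows, and since all three moves are covered the label bracket $\lbr{\cdot}$ descends to an invariant of isotopy classes of classical links.
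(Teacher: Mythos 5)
Your proposal is correct and follows essentially the same route as the paper: the paper's proof simply observes (via its figures) that smoothing the crossings involved in each Reidemeister move produces exactly the left-hand sides of the defining relations $R_{1.1}$, $R_{1.2}$, $R_{2.1}$--$R_{2.4}$, $R_{3.1}$, and notes that the displayed orientation variants suffice. Your additional remarks on locality of the relations and on the orientation bookkeeping are exactly the details the paper leaves implicit.
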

     \begin{proof}
     Figs.~\ref{F_R1.eps}~--~\ref{F_R3.eps} show that the label bracket $\lbr{\cdot}$ is  invariant under all three Reidemeister moves. It is well known that the considered variants of orientations are sufficient. 
      \end{proof}

      \insfigureasformula{p}{\mbox{\hspace*{0cm}$
      \begin{array}{c}
        \lbr{\ROFneg{0.7}}=  \lbr{\RFempty{0.7}} \\
        \lbr{\RFAneg{0.7}+\RFBneg{0.7}}=  \lbr{\RFempty{0.7}}\\
         \lbr{\ROFpos{0.7}} = \lbr{\RFempty{0.7}} \\
         \lbr{\RFApos{0.7}+\RFBpos{0.7}} =\lbr{\RFempty{0.7}}\\
      \end{array}$}}{The label bracket $\lbr{\cdot}$ is  invariant under the first Reidemeister move}{F_R1.eps}


      \insfigureasformula{p}{\mbox{\hspace*{-0cm}$
      \begin{array}{c}
        \lbr{\ROSsadcoh{0.5}}=\lbr{\RSemptycoh{0.5}}=\lbr{\ROShapcoh{0.5}} \\
        \lbr{\RSAAcoh{0.5}+\RSABcoh{0.5}+\RSBAcoh{0.5}+\RSBBcoh{0.5}} = \lbr{\RSemptycoh{0.5}}\\
        \lbr{\RSAAncoh{0.5}+\RSABncoh{0.5}+\RSBAncoh{0.5}+\RSBBncoh{0.5}} = \lbr{\RSemptycoh{0.5}}\\
         \lbr{\ROSsadrev{0.5}}=\lbr{\RSemptyrev{0.5}}=\lbr{\ROShaprev{0.5}} \\
        \lbr{\RSAArever{0.5}+\RSABrever{0.5}+\RSBArever{0.5}+\RSBBrever{0.5}} = \lbr{\RSemptyrev{0.5}}\\
        \lbr{\RSAArev{0.5}+\RSABrev{0.5}+\RSBArev{0.5}+\RSBBrev{0.5}} = \lbr{\RSemptyrev{0.5}}\\
      \end{array}$}}{The label bracket $\lbr{\cdot}$ is  invariant under the second Reidemeister move}{F_R2.eps}

      \insfigureasformula{p}{\mbox{\vspace*{0cm}\hspace*{0cm}$
      \begin{array}{c}
        \lbr{\ROTleft{0.35}}=\lbr{\ROTright{0.35}}, i.e.\\
         \lbr{\RTAAAleft{0.35}+\RTAABleft{0.35}+\RTABAleft{0.35}+\RTABBleft{0.35}}+ \\
  \lbr{\RTBAAleft{0.35}+\RTBABleft{0.35}+\RTBBAleft{0.35}+\RTBBBleft{0.35}}= \\
\end{array}$} \mbox{\hspace*{0cm}$\begin{array}{c}
   \lbr{\RTAAAright{0.35}+\RTAABright{0.35}+\RTABAright{0.35}+\RTABBright{0.35}}+ \\
  \lbr{\RTBAAright{0.35}+\RTBABright{0.35}+\RTBBAright{0.35}+\RTBBBright{0.35}}~~~ \\
      \end{array}$}}{The label bracket $\lbr{\cdot}$ is  invariant under the third Reidemeister move}{F_R3.eps}


      \section{Some known invariants of classical knots as particular cases of the label bracket $\lbr{\cdot}$}\label{S_part_cases}

Let us show that 
the 
  Jones polynomial~$\jbr{\cdot}$ \cite{Jones}  
  and the Kuperberg bracket $\kupbr{\cdot}$~\cite{1991Kuperberg}
  can be considered as  particular cases of the label bracket $\lbr{\cdot}$.
To this end, we  reduce the relations of the label bracket $\lbr{\cdot}$ given in Figs.~\ref{F_Rules_R1.eps}--~\ref{F_Rules_s12.eps}  to relations given in Fig.~\ref{F_KP.jpg}    and Fig.~\ref{F_A2.jpg}, respectively.


\subsection{The Jones polynomial $\jbr{\cdot}$ as a particular case of the label bracket $\lbr{\cdot}$}

     Recall that the Kauffman bracket $\kbr{\cdot}$~\cite{Jones}, \cite{1987Kauffman} is defined by the  relations given in Fig.~\ref{F_KB.jpg}.
      \insfigureasformula{}{$\begin{array}{l}
       \kbr{\unknot}=1\\
       ~\\
       \kbr{\Emcr{0.35}}= A\cdot\kbr{\RSAKauf{0.35}}+A^{-1}\cdot\kbr{\RSBKauf{0.35}}\\
       ~\\
        \kbr{\unknot\cup D}=(-A^2-A^{-2})\cdot \kbr{D}\\
      \end{array}$}{Relations of the Kauffman bracket $\kbr{\cdot}$, where $\unknot$ is a diagram of unknot without crossings, and $D$ is a  classical knot  diagram}{F_KB.jpg}

 By the  writhe of an oriented classical knot  diagram $D$ with $n$ crossings we mean  the sum over all  crossings of  $D$
$$w(D)=\sum\limits_{i=1}^{n}\varepsilon(i),$$
where $\varepsilon(i)$ is a sign of the $i$-th crossing of $D$ defined by the rules given in Fig.~\ref{F_signs.eps}.

  \insfigureasformula{}{$\begin{array}{ccc}
       \crpos{0.4}&~~~&\crneg{0.4}\\[1cm]
       \varepsilon(i)=1&~~~&\varepsilon(i)=-1\\
      \end{array}$}{Rules to define the sign $\varepsilon(i)$ of the $i$-th crossing}{F_signs.eps}


      \begin{teo}\label{Th_Kp}
     Let $D$ be an oriented classical knot  diagram, $w(D)$ be the  writhe of $D$, and $\kbr{D}$ be the Kauffman bracket of $D$. The Jones polynomial \cite{Jones}, \cite{1987Kauffman}
     \begin{equation}\label{Eq_XD}
    \jbr{D}=(-A)^{-3w(D)}\kbr{D}
       \end{equation}
       is a particular case of the label bracket $\lbr{D}$.
     \end{teo}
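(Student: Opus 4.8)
The plan is to define a \emph{specialization} map that turns the picture-valued label bracket into an honest Laurent polynomial in $A$, and then to check that it carries $\lbr{D}$ to $\kbr{D}$, up to the writhe correction that produces $\jbr{D}$. Concretely, I would send each empty-circle (white) vertex to the scalar $A$, each solid-circle (black) vertex to the scalar $A^{-1}$, erase each thin segment once its two endpoints have been converted to scalars, and assign to every resulting closed curve the Kauffman loop value $-A^2-A^{-2}$, with one distinguished curve normalised to $1$ as in Fig.~\ref{F_KB.jpg}. This is exactly the ``labels instead of coefficients'' dictionary of the introduction: the two endpoints of each smoothing segment record the monomial attached to that smoothing, and multiplying the two endpoint values recovers the Kauffman coefficient of that state.

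First I would verify that, under this dictionary, the defining data of $\lbr{\cdot}$ reduce term by term to the defining data of $\kbr{\cdot}$, i.e.\ to the relations of Fig.~\ref{F_KP.jpg}. The smoothing relations $R_{S.1}$ and $R_{S.2}$ of Fig.~\ref{F_Rules_s12.eps} become precisely the Kauffman skein relation of Fig.~\ref{F_KB.jpg}, since the white-dotted gadget carries $A$ and the black-dotted gadget carries $A^{-1}$ at both a positive and a negative crossing. The second- and third-Reidemeister relations $R_{2.1}$--$R_{2.4}$ and $R_{3.1}$ then reduce to identities that already hold in the Kauffman skein module, namely the $R2$- and $R3$-invariance of $\kbr{\cdot}$, so the specialization respects them. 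This term-by-term matching is the bulk of the bookkeeping.

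Having a well-defined scalar evaluation of each state graph, I would compute the image of the state sum $\sum_{s=1}^{2^n} G_s(D)$ of \eqref{Eq_lb_def}. A state $s$ smooths $a(s)$ crossings in the $A$-way and $b(s)$ crossings in the $B$-way, placing white and black dots accordingly, so its image is $A^{\,a(s)-b(s)}(-A^2-A^{-2})^{\,|s|-1}$, where $|s|$ is the number of loops of $G_s(D)$. Summing over all $2^n$ states reproduces exactly the state-sum formula for $\kbr{D}$. Multiplying by $(-A)^{-3w(D)}$, with $w(D)$ read off from Fig.~\ref{F_signs.eps}, then yields $\jbr{D}$ by \eqref{Eq_XD}.

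The main obstacle, and the conceptual heart of the statement, is the first-Reidemeister relation. Relations $R_{1.1}$ and $R_{1.2}$ of Fig.~\ref{F_Rules_R1.eps} assert that a curl equals a plain arc \emph{with no scalar factor}, because $\lbr{\cdot}$ is genuinely invariant under all three Reidemeister moves by Theorem~\ref{Th_inv}. Under the naive evaluation a positive curl instead produces $A(-A^2-A^{-2})+A^{-1}=-A^{3}$ times the arc, so the bare evaluation does \emph{not} descend to $C(G)$: it is incompatible with $R_{1.1}$. The resolution is exactly the factor $(-A)^{-3w(D)}$: adding a positive curl multiplies $\kbr{D}$ by $-A^{3}$ while raising $w(D)$ by $1$, and since raising $w(D)$ by $1$ multiplies the normalisation by $(-A)^{-3}$, the two effects cancel; symmetrically for $R_{1.2}$ with $-A^{-3}$ and $-1$. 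Hence the \emph{normalised} evaluation $D\mapsto(-A)^{-3w(D)}\kbr{D}$ is $R1$-invariant and so compatible with $R_{1.1}$ and $R_{1.2}$, which is why it is the Jones polynomial, rather than the bare Kauffman bracket, that factors through $\lbr{\cdot}$. Making this compatibility precise, and tracking how the writhe is apportioned across the states, is the step I expect to require the most care.
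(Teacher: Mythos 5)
Your overall strategy---specialize vertices to powers of $A$, circles to $-A^2-A^{-2}$, and check the defining relations of $\lbr{\cdot}$ one by one---is the same as the paper's, but your dictionary breaks down at exactly the point you yourself flag as delicate, and your proposed fix does not repair it. The assignment (white vertex $\mapsto A$, black vertex $\mapsto A^{-1}$) depends only on the colour of the vertices, so it discards the oriented/unoriented distinction on the thin segments. That distinction is the only trace of the crossing sign surviving in a state graph: a positive crossing yields a white \emph{unoriented} segment for its $A$-smoothing and a black \emph{oriented} one for its $B$-smoothing, while a negative crossing reverses the roles. Consequently your evaluation lands on the bare Kauffman bracket, which, as you compute, violates $R_{1.1}$ and $R_{1.2}$ and hence does not descend to $C(G)$. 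Multiplying afterwards by $(-A)^{-3w(D)}$ does produce the number $\jbr{D}$, but that correction is read off from the diagram $D$, not from the class $\lbr{D}\in C(G)$; so the argument proves the classical fact that $(-A)^{-3w}\kbr{\cdot}$ is an invariant, not the assertion that $\jbr{\cdot}$ factors through the picture-valued bracket $\lbr{\cdot}$.

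The missing idea---the actual content of the paper's proof---is to apportion the writhe locally using the edge-orientation data: each thin segment is sent to (Kauffman coefficient of that smoothing)$\,\times\,(-A)^{-3\varepsilon}$, where the sign $\varepsilon$ of the originating crossing is recovered from the colour together with whether the segment is oriented. This gives the four values $-A^{-2}$ (white, unoriented), $-A^{-4}$ (black, oriented), $-A^{4}$ (white, oriented), $-A^{2}$ (black, unoriented); their product over a state is $A^{a(s)-b(s)}(-A)^{-3w(D)}$, so $R_{S.1}$ and $R_{S.2}$ become $R_{JP.2}$ and $R_{JP.3}$, the relations $R_{2.1}$--$R_{2.4}$ force the loop value $-A^2-A^{-2}$, and $R_{1.1}$, $R_{1.2}$, $R_{3.1}$ reduce to trivial identities, i.e.\ the evaluation genuinely descends to $C(G)$. (A separate minor slip: if each white vertex is worth $A$, a white segment contributes $A\cdot A=A^{2}$; you need the segment as a whole, not each endpoint, to carry the Kauffman coefficient, otherwise your state sum is $A^{2a(s)-2b(s)}(-A^2-A^{-2})^{|s|-1}$ rather than the Kauffman bracket.)
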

     \begin{proof}

 Taking into account the relations of the  Kauffman bracket~$\kbr{\cdot}$ given in Fig.~\ref{F_KB.jpg}, we represent formula~\eqref{Eq_XD} as the relations, see Fig.~\ref{F_KP.jpg}.

 \insfigureasformula{}{$\begin{array}{l}
      R_{JP.1}: \jbr{\unknot}=1\\
       ~\\
       R_{JP.2}: \jbr{\crposem{0.3}}= (-A)^{-3}\cdot\left(A\cdot\jbr{\RSAKauf{0.3}}+A^{-1}\cdot\jbr{\RSBKauf{0.3}}\right)\\
        ~\\
         R_{JP.3}: \jbr{\crnegem{0.3}}= (-A)^{3}\cdot\left(A\cdot\jbr{\RSAKauf{0.3}}+A^{-1}\cdot\jbr{\RSBKauf{0.3}}\right)\\
       ~\\
        R_{JP.4}: \jbr{\unknot\cup D}=(-A^2-A^{-2})\cdot \jbr{D}\\
      \end{array}$}{Relations of the Jones polynomial $\jbr{\cdot}$, where $\unknot$ is a diagram of unknot without crossings, and $D$ is a  classical knot  diagram}{F_KP.jpg}


In view of the  relations given in Fig.~\ref{F_KP.jpg}, the Jones polynomial~$\jbr{\cdot}$ can be obtained from the label bracket $\lbr{\cdot}$ as follows:
\begin{enumerate}
  \item[1)] each unoriented thin edge  incident to vertices denoted by empty circles (together with both these circles) is replaced with multiplication by $-A^{-2}$;
  \item[2)] each oriented thin edge  incident to vertices denoted by solid circles (together with both these circles) is replaced with multiplication by $-A^{-4}$;
  \item[3)] each oriented thin edge  incident to vertices denoted by empty circles (together with both these circles) is replaced with multiplication by $-A^{4}$;
  \item[4)]  each unoriented thin edge  incident to vertices denoted by solid circles (together with both these circles) is replaced with multiplication by $-A^{2}$;
\item[5)] any information on orientation of the graph edges  is removed.
\end{enumerate}

 Therefore, we obtain that relation $R_{S.1}$   gives relation  $R_{JP.2}$, see Fig.~\ref{F_Rules_s12toRK2.eps}. Relation $R_{S.2}$ gives relation  $R_{JP.3}$ in the same way.

  \insfigureasformula{}{$\begin{array}{l}
R_{S.1}:\lbr{\RScrpos{0.5}}=\lbr{\RSApos{0.5}+\RSBpos{0.5}}\\
~\\
Steps~1~-~4~give\\
~\\
{\crposem{0.5}}={-A^{-2}\cdot\RSAposem{0.5}-A^{-4}\cdot\RSBposem{0.5}}~.\\
~\\
Step~5~gives\\
~\\
 R_{JP.2}: \jbr{\crposem{0.3}}= (-A)^{-3}\cdot\left(A\cdot\jbr{\RSAKauf{0.3}}+A^{-1}\cdot\jbr{\RSBKauf{0.3}}\right).\end{array}$}{Relation $R_{JP.2}$  of the Jones polynomial~$\jbr{\cdot}$ is obtained from relation $R_{S.1}$ of the label bracket $\lbr{\cdot}$}{F_Rules_s12toRK2.eps}



Relation $R_{2.1}$  gives relation  $R_{JP.4}$, see Fig.~\ref{F_R2givesRKP4.eps}. Relations $R_{2.2}$ -- $R_{2.4}$ give relation  $R_{JP.4}$ in the same way.

\insfigureasformula{}{\mbox{\hspace*{0cm}$\begin{array}{l}
R_{2.1}: \lbr{\RSBAcoh{0.45}+\RSABcoh{0.45}+\RSBBcoh{0.45}+\RSAAcoh{0.45}} = \lbr{\RSemptycoh{0.45}}
\\
~\\
Steps~1~-~4~give\\
~\\
\RSBAcohem{0.45}+\RSABcohem{0.45}+A^{-2}\cdot\RSBBcohem{0.45}+A^{2}\cdot\RSAAcohem{0.45} = {\RSemptycoh{0.45}}~.
\\
~\\
Step~5~gives~\\
~\\
\RSABcohempty{0.4}=-A^{-2}\cdot\RSAKauf{0.4}-A^{2}\cdot\RSAKauf{0.4}~.\\
~\\
Therefore,~we~have\\
~\\
R_{JP.4}: \jbr{\unknot\cup D}=(-A^2-A^{-2})\cdot \jbr{D}.\\
\end{array}$}}{Relation $R_{JP.4}$  of the  Jones polynomial~$\jbr{\cdot}$  is obtained from relation $R_{2.1}$  of the label bracket $\lbr{\cdot}$}{F_R2givesRKP4.eps}


Taking into account relation $R_{JP.4}$ obtained above, we have that relation  $R_{1.1}$ is reduced to a trivial identity, see Fig.~\ref{F_Rules_R1toid.eps}. Relation $R_{1.2}$ is reduced to a trivial identity in the same way.

\insfigureasformula{}{$\begin{array}{l}
R_{1.1}: \lbr{\RFApos{0.7}+\RFBpos{0.7}} = \lbr{\RFempty{0.7}}\\
        ~\\
       Steps~1~-~4~give\\
        ~\\
      {-A^{2}\cdot\RFAposi{0.7}+A^{4}\cdot\RFBposi{0.7}} = {\RFempty{0.7}}~.\\
        ~\\
       Taking~into~account~relation~R_{JP.4}~and~Step~5,~we~have\\
        ~\\
          {-A^{2}\cdot(-A^{2}-A^{-2})\cdot\RFemptyunor{0.6}+A^{4}\cdot\RFemptyunor{0.6}} = {\RFemptyunor{0.6}}~.\\
\end{array}$}{Relation $R_{1.1}$ of the label bracket $\lbr{\cdot}$ is reduced to a trivial identity}{F_Rules_R1toid.eps}


Taking into account relation $R_{JP.4}$ obtained above, we have that relation  $R_{3.1}$ is reduced to a trivial identity.

Finally, it is necessary to add  normalisation relation $R_{JP.1}$. 

     \end{proof}

      \subsection{The Kuperberg bracket~$\kupbr{\cdot}$ as a particular case of the label bracket $\lbr{\cdot}$}

     \begin{teo}\label{Th_A2}
     The Kuperberg bracket~$\kupbr{\cdot}$~\cite{1991Kuperberg} defined by the  relations given in Fig.~\ref{F_A2.jpg}  is a particular case of the label bracket~$\lbr{\cdot}$.

     \insfigureasformula{}{$\begin{array}{l}
     R_{A2.1}: \kupbr{\emptyset} = 1
\\
~\\
R_{A2.2}: \kupbr{\orcircle{0.5}} =\left(q+1+q^{-1}\right)\cdot \kupbr{\emptyd{0.5}}
\\
~\\
R_{A2.3}: \kupbr{\kupscirc{0.5}} =\left(q^{\frac{1}{2}}+q^{-\frac{1}{2}}\right)\cdot \kupbr{\kupltor{0.5}}\\
~\\
R_{A2.4}: \kupbr{\kupsqv{0.5}} = \kupbr{\RSemptyrev{0.5}}+\kupbr{\emptyrevvert{0.5}}\\
~\\
R_{A2.5}: \kupbr{\crposem{0.5}} = -q^{\frac{1}{6}}\cdot\kupbr{\kupline{0.5}}+q^{-\frac{1}{3}}\cdot\kupbr{\emptycohvert{0.5}}\\
~\\
R_{A2.6}: \kupbr{\crnegem{0.5}} = -q^{-\frac{1}{6}}\cdot \kupbr{\kupline{0.5}}+q^{\frac{1}{3}}\cdot\kupbr{\emptycohvert{0.5}}\\
\end{array}$}{Relations of the Kuperberg bracket~$\kupbr{\cdot}$}{F_A2.jpg}

     \end{teo}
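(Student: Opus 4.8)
The plan is to imitate the proof of Theorem~\ref{Th_Kp} relation by relation, replacing the five substitution steps used there for the Jones polynomial by a dictionary that specialises the label bracket $\lbr{\cdot}$ to the $A_2$ spider of Fig.~\ref{F_A2.jpg}. The essential difference from the Jones case is that the trivalent structure of the label graphs must now be kept rather than erased, since the Kuperberg webs are themselves oriented trivalent graphs. Concretely, I would retain all edge orientations (in contrast to Step~5, which discarded them) and treat the two species of thin edge oppositely: an \emph{oriented} thin edge, joining two source/sink vertices of types $(V.5)$--$(V.8)$, is kept as the internal edge of an $I$-web, whereas an \emph{unoriented} thin edge, joining two coherent vertices of types $(V.1)$--$(V.4)$, is contracted, its two circles being deleted and the two incident thick arcs fused into a single oriented strand. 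Each labeled thin edge then carries the weight
\[
\begin{array}{ll}
\text{empty, unoriented}\ \longmapsto\ q^{-1/3}, & \text{solid, unoriented}\ \longmapsto\ q^{1/3},\\
\text{empty, oriented}\ \longmapsto\ -\,q^{-1/6}, & \text{solid, oriented}\ \longmapsto\ -\,q^{1/6},
\end{array}
\]
which is the $A_2$ counterpart of the values $-A^{\mp 2}$, $-A^{\mp 4}$ appearing in Steps~1--4 of Theorem~\ref{Th_Kp}.

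The first and easiest check is that the smoothing relations reproduce $R_{A2.5}$ and $R_{A2.6}$, exactly as $R_{S.1}$ gave $R_{JP.2}$ in Fig.~\ref{F_Rules_s12toRK2.eps}. In $R_{S.1}$ the empty smoothing is an $H$-web whose unoriented bar, on contraction, becomes two parallel oriented strands weighted by $q^{-1/3}$, while the solid smoothing is an $I$-web kept intact and weighted by $-q^{1/6}$; their sum is precisely $R_{A2.5}$. For the negative crossing $R_{S.2}$ the shapes are interchanged: the empty smoothing is now an $I$-web (oriented bar, kept, weight $-q^{-1/6}$) and the solid smoothing an $H$-web (unoriented bar, contracted, weight $q^{1/3}$), giving $R_{A2.6}$. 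In particular the four weights displayed above are forced by, and consistent with, these two relations.

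Next I would push the remaining Reidemeister relations through the same dictionary. Contracting every unoriented bar and keeping every oriented one with its sign, the four second Reidemeister relations $R_{2.1}$--$R_{2.4}$ should collapse onto the two-strand relations of the spider, namely the bigon relation $R_{A2.3}$ with coefficient $q^{1/2}+q^{-1/2}$ and the square relation $R_{A2.4}$, in the same spirit as the derivation of $R_{JP.4}$ from $R_{2.1}$ in Fig.~\ref{F_R2givesRKP4.eps}; closing a bigon into a circle then yields the loop value $R_{A2.2}$, a disjoint oriented circle evaluating to $q+1+q^{-1}$. The third Reidemeister relation $R_{3.1}$ should reduce to a formal consequence of $R_{A2.3}$ and $R_{A2.4}$, as it must, since those are the defining relations of the spider. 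It then remains only to adjoin the normalisation $R_{A2.1}$, $\kupbr{\emptyset}=1$, exactly as $R_{JP.1}$ was adjoined.

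The main obstacle, I expect, will be the first Reidemeister relations $R_{1.1}$ and $R_{1.2}$. Unlike the Jones case, where $R_{1.1}$ reduced to a bare identity (Fig.~\ref{F_Rules_R1toid.eps}), here the two curl resolutions specialise respectively to a strand carrying a disjoint loop, hence a factor $q+1+q^{-1}$, and to a strand carrying a bigon, hence a factor $q^{1/2}+q^{-1/2}$; one must then determine how these combine through the fractional weights $q^{\pm 1/6}$, $q^{\pm 1/3}$. This is the delicate point at which the half-integer powers and the values $[2]_q=q^{1/2}+q^{-1/2}$, $[3]_q=q+1+q^{-1}$ must interact correctly, and where any twist (framing) contribution coming from the first Reidemeister move has to be properly accounted for. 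Verifying this consistency is the crux; once it is settled, invariance under the second and third moves is the routine bookkeeping of the preceding paragraph, and the theorem follows.
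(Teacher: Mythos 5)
Your proposal follows essentially the same route as the paper: the same four weights $q^{\pm 1/3}$ and $-q^{\pm 1/6}$ attached to unoriented and oriented thin edges, the same rule of contracting the former and retaining the latter so that label graphs become oriented trivalent webs, and the same matching of $R_{S.1},R_{S.2}$ with $R_{A2.5},R_{A2.6}$, of $R_{2.1}$--$R_{2.4}$ with $R_{A2.2}$--$R_{A2.4}$, and of $R_{3.1}$ with a formal consequence of those, followed by adjoining the normalisation $R_{A2.1}$. The only point of divergence is $R_{1.1},R_{1.2}$, which you leave as an unresolved ``crux''; the paper disposes of them in one line by noting that the Kuperberg bracket is already an ambient-isotopy (in particular first-Reidemeister) invariant, so these relations are simply discarded rather than checked against the curl evaluations.
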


     \begin{proof}

 The Kuperberg bracket~$\kupbr{\cdot}$ can be obtained from the label bracket $\lbr{\cdot}$ as follows:

\begin{enumerate}
  \item[1)] each unoriented thin edge  incident to vertices denoted by empty circles is replaced with multiplication by $q^{-\frac{1}{3}}$;
  \item[2)]    both vertices denoted by solid circles and incident to oriented thin edge  are replaced with multiplication by $-q^{\frac{1}{6}}$;
       \item[3)]    both vertices denoted by empty circles and incident to oriented thin edge  are replaced with multiplication by $-q^{-\frac{1}{6}}$;
  \item[4)] each unoriented thin edge  incident to vertices denoted by solid circles is replaced with multiplication by $q^{\frac{1}{3}}$;
      \item[5)] all edges are denoted by thick lines.

\end{enumerate}

 Therefore, we  immediately obtain that relation $R_{S.1}$   gives relation  $R_{A2.5}$, while relation $R_{S.2}$ gives relation  $R_{A2.6}$.


Relations $R_{1.1}$ and $R_{1.2}$ are removed at all, since these relations correspond to the first  Reidemeister move, while the Kuperberg bracket~$\kupbr{\cdot}$ is  invariant under ambient isotopy of classical knot diagrams.

Relation $R_{2.1}$   gives relation  $R_{A2.3}$, see Fig.~\ref{F_R21givesA2.eps}, while relation $R_{2.3}$ gives relations $R_{A2.2}$ and $R_{A2.4}$, see Fig.~\ref{F_R23givesA2.eps}. 
Relations $R_{2.2}$ and $R_{2.4}$ give relations $R_{A2.2}$ -- $R_{A2.4}$ in the same way.

\insfigureasformula{}{\mbox{\hspace*{0cm}$\begin{array}{l}
R_{2.1}: \lbr{\RSBAcoh{0.45}+\RSABcoh{0.45}+\RSBBcoh{0.45}+\RSAAcoh{0.45}} = \lbr{\RSemptycoh{0.45}}
\\
~\\
Steps~1~-~5~give\\
~\\
\RSBAcohem{0.45}+\RSABcohemi{0.45}-q^{\frac{1}{2}}\cdot\RSBBcohemi{0.45}-q^{-\frac{1}{2}}\cdot\RSAAcohemi{0.45} = {\RSemptycoh{0.45}}~.
\\
~\\
Therefore,~we~have\\
~\\
R_{A2.3}: \kupbr{\kupscirc{0.45}} =\left(q^{\frac{1}{2}}+q^{-\frac{1}{2}}\right)\cdot \kupbr{\kupltor{0.45}}.\\
\end{array}$}}{Relation $R_{A2.3}$ of the Kuperberg bracket~$\kupbr{\cdot}$ is obtained from relation $R_{2.1}$  of the label bracket $\lbr{\cdot}$}{F_R21givesA2.eps}


\insfigureasformula{}{\mbox{\hspace*{0cm}$\begin{array}{l}
R_{2.3}: \lbr{\RSBArever{0.45}+\RSABrever{0.45}+\RSBBrever{0.45}+\RSAArever{0.45}} = \lbr{\RSemptyrev{0.45}}
\\
~\\
Steps~1~-~5~give\\
~\\
\RSBAreveri{0.45}+\RSABreveri{0.45}-q^{\frac{1}{2}}\cdot\RSBBreveri{0.45}-q^{-\frac{1}{2}}\cdot\RSAAreveri{0.45} = {\RSemptyrev{0.45}}~.
\\
~\\
We~use~R_{A2.3}~for~3-th~and~4-th~terms,then\\
~\\
\RSBAreveri{0.45}+\RSABreveri{0.45}={\RSemptyrev{0.45}}+\emptyrevvert{0.45}
+\left(q+1+q^{-1}\right)\cdot\emptyrevvert{0.45}~.
 \\
 ~\\
Therefore,~we~have~\\
R_{A2.4}: \kupbr{\kupsqv{0.45}} = \kupbr{\RSemptyrev{0.45}}+\kupbr{\emptyrevvert{0.45}},\\
~\\
R_{A2.2}: \kupbr{\orcircle{0.45}} =\left(q+1+q^{-1}\right)\cdot \kupbr{\emptyd{0.45}}.
\\
\end{array}$}}{Relations $R_{A2.2}$ and $R_{A2.4}$ of the Kuperberg bracket~$\kupbr{\cdot}$ are obtained from relation $R_{2.3}$  of the label bracket $\lbr{\cdot}$}{F_R23givesA2.eps}


Taking into account relations $R_{A2.3}$ and $R_{A2.4}$  of the Kuperberg bracket~$\kupbr{\cdot}$  obtained above, we have that relation $R_{3.1}$  of the label bracket $\lbr{\cdot}$ is reduced to a trivial identity.

Finally, it is necessary to add  normalisation relation $R_{A2.1}$. 

     \end{proof}

     \section{The label bracket~$\lbr{\cdot}$ for virtual knots }\label{S_VK}

     In the previous section, 
we constructed various realisations of the label bracket $\lbr{\cdot}$
as polynomial invariants of classical knots.

       In all the above cases, after reducing with respect to the  relations given in Figs.~\ref{F_Rules_R1.eps}--~\ref{F_Rules_s12.eps},
we get a polynomial by calculating values of the simplest pictures (circles).

       Nevertheless, from the reduction relations it does not follow immediately that the values of the invariant are
polynomials. Moreover, there is no evidence of having variables instead of dots.

      In the present section, we construct an invariant of {\em virtual links} valued in {\em graphs}.
Namely, we use the same  relations given in Figs.~\ref{F_Rules_R1.eps}--~\ref{F_Rules_s12.eps}
as above, but the pictures are not reduced any more to polynomials.

      The argument is similar to that from~\cite{ManturovKauffman}. More precisely, the invariant constructed in~\cite{ManturovKauffman} is valued in linear combinations of graphs and is a specification of our polynomial.

The advantage of the invariant from~\cite{ManturovKauffman} is that it is more concrete: it gives linear combinations of specific graphs and not elements of some module generated by graphs modulo relations.

       In  Section~\ref{S_knotoids}, we do the same for the case of knotoids.



    A {\em virtual knot diagram} is a planar regular 4-graph where each
crossing is either {\em classical} or {\em virtual}. In the former case, the crossing is
endowed with over/under information as usual, i.e. $\crnegat{.2}$ and $\crposit{.2}$, while  in the latter case the crossing is encircled
as $\vcr{.2}$. Besides planar graphs, we also allow circular components
having no crossings.

       Virtual knot diagrams can be oriented as well as classical knot diagrams. 

      Our strategy will be to ignore virtual crossings and consider virtual knot diagrams
as collections of classical crossings with an information how they are connected to
each other. To this end, we shall define the {\em virtual picture} as an equivalence
class of virtual knot diagrams modulo the detour move, see an example in Fig.~\ref{F_dmove}. This detour move admits local versions called the {\em virtual }
and  {\em  semivirtual Reidemeister moves}, see Fig.~\ref{F_vmove}.

        \insfigureasformula{t}{$$\dettcir{.5}=\detbcirloop{.5}$$}{An example: the  detour move on two strands}{F_dmove}

        \insfigureasformula{b}{$\begin{array}{ccc}
                               \ROFnegv{.4}=\RFemptyv{.4}~ & \ROSsadv{.4}=\RSemptyv{.4}~ & \Rthleftv{.2}=\Rthrightv{.2}~ \\
                               &&\\
                                & \Rthleft{.2}=\Rthright{.2} &
                             \end{array}$}{The  virtual (top) and semivirtual (bottom) Reidemeister moves}{F_vmove}

           In other words, we can say that a virtual picture is an equivalence class
of virtual knot diagrams modulo virtual and semivirtual Reidemeister moves.

           A {\em virtual link} is an equivalence class of virtual pictures modulo
classical Reidemeister moves, which deal with classical crossings only.
A {\em virtual knot} is an one-component virtual link.

           Now, we can generalise the definitions of the above invariants to virtual
knots and links.


           The only difference with the above text will be that the graphs of  the
label bracket~$\lbr{\cdot}$ 
are not trivalent any more; they are allowed to have 4-valent vertices  denoted by $\vcr{.2}$. Alternatively, these graphs can be treated as immersed trivalent graphs with 4-valent crossings being artifacts of the immersion.

However, let us give formal definitions.

By a \emph{virtual label graph}  we mean a planar connected graph having $2\cdot n$
3-valent vertices of types given in  Fig.~\ref{F_Vertices.eps}, where $n\in \mathbb{Z}^{+}$, and some 4-valent vertices  denoted by $\vcr{.2}$. 


 Denote by $V(G)$ 
 a module over $\mathbb{Z}$ generated by virtual label graphs  
  modulo relations~\eqref{Eq_lb_rel}, the relations given in Fig.~\ref{F_vmove} on the top and in Fig.~\ref{F_Vlg.eps}.

  \insfigureasformula{t}{$\begin{array}{c}
                               \dettcir{.3}=\detbcir{.3}~, \\
                               ~\\
                              where~each~solid~circle~is~replaced~by~the~same~fragment~of~the~form  \\
                               ~\\
                                \RSApos{0.5}~,~\RSBpos{0.5}~,~\RSAneg{0.5}~,~or~\RSBneg{0.5}~.\\
                             \end{array}$}{A version of the  semivirtual  Reidemeister move for virtual label graphs}{F_Vlg.eps}

 Let $D$ be an oriented virtual knot diagram having $n$ classical crossings.

In order to smooth classical crossings of the diagram $D$, we follow the approach given on page~\pageref{D_sccr}. States of the diagram $D$ are defined in a way given on page~\pageref{D_state}.


      The \emph{label bracket~$\lbr{D}$ of  an oriented virtual knot  diagram $D$} is the  sum~\eqref{Eq_lb_def}
        modulo relations~\eqref{Eq_lb_rel},  the relations given in Fig.~\ref{F_vmove} on the top and in Fig.~\ref{F_Vlg.eps}. Therefore, if $D$ is  an oriented virtual knot  diagram, then the label bracket $\lbr{D}$ takes
values in $V(G)$. 

The sum~\eqref{Eq_lb_def}  is taken over all possible states of the~diagram~$D$. Here $s$ is a state of the diagram $D$, and $G_s(D)$ is a virtual label graph, which is obtained as a result of smoothing each classical crossing of $D$ according to the state $s$ by the  smoothing relations $R_{S.1}$ and $R_{S.2}$ given in Fig.~\ref{F_Rules_s12.eps}.



         \begin{teo}\label{Th_inv_virt}
     The label bracket $\lbr{\cdot}$ is  invariant under isotopy of  virtual links.
     \end{teo}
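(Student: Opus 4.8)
The plan is to reuse the three-move structure from the proof of Theorem~\ref{Th_inv}, now verifying invariance of $\lbr{\cdot}$, valued in $V(G)$, under the full set of generalised Reidemeister moves for virtual links: the classical moves $R_1$, $R_2$, $R_3$, the purely virtual Reidemeister moves, and the mixed (semivirtual) move. The classical moves require nothing new. The smoothing relations $R_{S.1}$, $R_{S.2}$ and the reducing relations~\eqref{Eq_lb_rel} hold in $V(G)$ exactly as in $C(G)$, and the local computations in Figs.~\ref{F_R1.eps}--\ref{F_R3.eps} involve only the classical crossings participating in a move together with the graph fragments produced by smoothing them. Since these fragments and the identities among them are insensitive to far-away $4$-valent virtual vertices, the arguments transfer verbatim and give $\lbr{D}=\lbr{D'}$ for any classical Reidemeister move.

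First I would dispose of the purely virtual Reidemeister moves on the top of Fig.~\ref{F_vmove}. The key point is that smoothing acts on classical crossings only, leaving each virtual crossing untouched as a $4$-valent vertex $\vcr{.2}$; in particular a virtual move does not change the number $n$ of classical crossings, hence it does not change the index set of the $2^n$ states in~\eqref{Eq_lb_def}. Therefore, if $D'$ is obtained from $D$ by such a move, then for each state $s$ the graphs $G_s(D)$ and $G_s(D')$ differ only by the corresponding virtual move on virtual label graphs. These graph moves are exactly the defining relations of $V(G)$ coming from the top of Fig.~\ref{F_vmove}, so $G_s(D)=G_s(D')$ in $V(G)$ for every $s$, and summation over states yields $\lbr{D}=\lbr{D'}$ term by term.

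The remaining and most delicate case is the semivirtual move at the bottom of Fig.~\ref{F_vmove}, in which a single classical crossing is pushed across a pair of virtual crossings. Here the state sum genuinely interacts with the virtual data: one must resolve the classical crossing by $R_{S.1}$ or $R_{S.2}$ and then compare the resulting virtual label graphs on the two sides of the move. I would show that, after resolution, the equality of the two sides reduces state by state to the relation of Fig.~\ref{F_Vlg.eps}, in which each small circle is replaced by precisely the smoothing fragment produced by that resolution, i.e.\ one of the four fragments listed in Fig.~\ref{F_Vlg.eps}. Since that relation is imposed in $V(G)$ uniformly over exactly these four fragments, each state contributes equal terms to both sides, and invariance follows.

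The principal obstacle I anticipate lies in this last step: one must verify that, for every admissible orientation and for each $A$/$B$ resolution of the classical crossing involved, the induced identification of graph fragments is genuinely an instance of Fig.~\ref{F_Vlg.eps} and not some relation outside $V(G)$. This is a finite but tedious case check, running over the smoothing fragments and the two mirror versions of the semivirtual move. Once it is settled, together with the classical case of Theorem~\ref{Th_inv} and the defining relations of $V(G)$, the invariance of $\lbr{\cdot}$ under isotopy of virtual links is immediate.
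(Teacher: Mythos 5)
Your proposal is correct and follows essentially the same route as the paper: classical moves are handled exactly as in Theorem~\ref{Th_inv}, the purely virtual moves are absorbed state by state into the defining relations of $V(G)$ from the top of Fig.~\ref{F_vmove}, and the semivirtual move reduces, after resolving the classical crossing by $R_{S.1}$/$R_{S.2}$, to the relation of Fig.~\ref{F_Vlg.eps} --- which is precisely the computation the paper records in Fig.~\ref{F_Rsvirt.eps} for one representative case, leaving the remaining orientations and resolutions to the same finite check you describe.
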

     \begin{proof}
     Figs.~\ref{F_R1.eps}~--~\ref{F_R3.eps} show that the label bracket $\lbr{\cdot}$ is  invariant under all three classical Reidemeister moves. It is well known that the considered variants of orientations are sufficient. In addition, the label bracket $\lbr{\cdot}$ is  invariant under   the semivirtual Reidemeister move, see Fig.~\ref{F_Rsvirt.eps} for the proof in one of the possible cases (the proof for the remaining cases is the same). An invariance under all three virtual Reidemeister moves is obvious. 
      \end{proof}

      \insfigureasformula{t}{$\begin{array}{c}                             \lbr{\Rthleftor{.3}}=\lbr{\Rthrightor{.3}} \\
                               ~\\                                    \lbr{\detbcirwvo{0.3}+\detbcirbhn{0.3}}=\lbr{\dettcirwvo{0.3}+\dettcirbhn{0.3}}\\ \end{array}$}{The label bracket $\lbr{\cdot}$ is  invariant under the semivirtual \mbox{Reidemeister} move}{F_Rsvirt.eps}

                             \begin{teo}

                             The picture-valued Kuperberg bracket~$\kupbr{\cdot}$~\cite{ManturovKauffman}  
     defined for virtual knots by the  relations given in Fig.~\ref{F_A2.jpg}, where $q=A^{-6}$,  is a particular case of the label bracket~$\lbr{\cdot}$.
     \end{teo}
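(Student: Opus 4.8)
The plan is to follow the proof of Theorem~\ref{Th_A2} almost verbatim, now working in the module $V(G)$ in place of $C(G)$, and then to treat separately the two families of relations that are new in the virtual setting. First I would apply the very same specialisation used there, namely Steps~1)--5) of the proof of Theorem~\ref{Th_A2}: collapse each thin edge together with its two endpoint circles to the scalar $q^{-\frac13}$, $-q^{\frac16}$, $-q^{-\frac16}$, or $q^{\frac13}$ according to its type and orientation, and forget the thin/thick distinction on the surviving edges. Since the defining relations~\eqref{Eq_lb_rel} of $V(G)$ coincide literally with those of $C(G)$, every computation carried out in the proof of Theorem~\ref{Th_A2} transfers without alteration: the smoothing relations $R_{S.1}$, $R_{S.2}$ yield $R_{A2.5}$, $R_{A2.6}$; relation $R_{2.1}$ yields $R_{A2.3}$ and relation $R_{2.3}$ yields $R_{A2.2}$ together with $R_{A2.4}$; and $R_{1.1}$, $R_{1.2}$, $R_{3.1}$ reduce to trivial identities. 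Adjoining the normalisation $R_{A2.1}$ then reproduces all of Fig.~\ref{F_A2.jpg}. The role of the substitution $q=A^{-6}$ is simply to translate these relations into the $A$-normalised conventions in which the picture-valued Kuperberg bracket of~\cite{ManturovKauffman} is stated; the computation itself is insensitive to the particular value of $q$.

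It then remains to verify that the specialisation respects the two relations carried by $V(G)$ but not by $C(G)$, namely the virtual Reidemeister moves of Fig.~\ref{F_vmove} (top) and the semivirtual move for label graphs of Fig.~\ref{F_Vlg.eps}. The virtual moves only rearrange $4$-valent vertices and thick edges, creating and destroying neither thin edges nor circles; as the specialisation acts as the identity on $4$-valent vertices and on thick edges, each of these relations is sent to the corresponding virtual Reidemeister move of the picture-valued Kuperberg bracket, which holds in the target by the detour-move invariance of~\cite{ManturovKauffman}. Thus these relations impose nothing beyond what the target already satisfies.

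The genuinely new point, and the step I expect to be the main obstacle, is the semivirtual relation of Fig.~\ref{F_Vlg.eps}, which equates the two detour configurations $\dettcir{0.2}$ and $\detbcir{0.2}$ with each solid circle replaced by one of the smoothing fragments $\RSApos{0.3}$, $\RSBpos{0.3}$, $\RSAneg{0.3}$, $\RSBneg{0.3}$. Under Steps~1)--5) each such fragment becomes a thick arc weighted by its associated scalar, so the relation specialises to the detour of a strand past a smoothed fragment in the Kuperberg picture. I would check the four cases one at a time, showing that each specialised identity follows from the relations of Fig.~\ref{F_A2.jpg} combined with the detour invariance established in~\cite{ManturovKauffman}; this is precisely the compatibility already exploited, for the label bracket itself, in the proof of invariance under the semivirtual move (Fig.~\ref{F_Rsvirt.eps} and Theorem~\ref{Th_inv_virt}). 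Once these four checks are in place, the specialisation is a well-defined homomorphism from $V(G)$ to the target and sends $\lbr{D}$ to $\kupbr{D}$, completing the proof.
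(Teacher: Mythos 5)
Your proposal is correct and follows essentially the same route as the paper, whose entire proof of this theorem is the single sentence that it ``is proved by analogy with Theorem~\ref{Th_A2}.'' In fact you go further than the paper does: your explicit verification that the specialisation respects the additional relations of $V(G)$ (the virtual Reidemeister moves and the semivirtual relation of Fig.~\ref{F_Vlg.eps}) is a necessary step for the specialisation to be well defined on the quotient module, and the paper leaves it entirely implicit.
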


     \begin{proof} The theorem is proved by analogy with Theorem~\ref{Th_A2}.
\end{proof}

 Let us give an example of a virtual link~$L$ such that the Kuperberg bracket~$\kupbr{L}$ is a sum of graphs, i.e. takes values in pictures and can not be reduced to a polynomial in variables~$q$. Consider a virtual link~$L$, which can be embedded in the thickened torus, see Fig.~\ref{F_hex.eps}~(a). The Kuperberg bracket~$\kupbr{L}$ contains a  graph given in Fig.~\ref{F_hex.eps}~(b), which, according to the relations given in Fig.~\ref{F_A2.jpg}, admits no simplification. Indeed, all four faces of this  graph are hexagons, and all other graphs in the sum have the less number of vertices. Here by a face of a graph $G$ embedded in the 2-dimensional torus~$T^2$ we mean a connected component of the set $T^2\setminus G$.  
This example shows how to construct  any number of  virtual links embedded in the thickened torus such that  the Kuperberg bracket~$\kupbr{L}$ is a sum of graphs.
\insfigureasformula{t}{\mbox{\hspace*{0cm}$\begin{array}{cc}
   \exvktfour{.35} & \exvktfourhex{.35} \\
   &\\
   (a) & (b) \\
 \end{array}
$}}{An example: a virtual link~$L$ (a) and its graph in the Kuperberg bracket~$\kupbr{L}$  having 4 hexagons (b)}{F_hex.eps}

     \section{The normalised arrow polynomial $\narbr{\cdot}$ as a particular case of the label bracket $\lbr{\cdot}$ }\label{S_VKar}

      The arrow polynomial $\arbr{\cdot}$~\cite{Kauffman} is defined by the relations given in Fig.~\ref{F_AP.jpg}. In the last relation, following~\cite{Kauffman}, we illustrate how the disoriented smoothing
is a local disjoint union of two vertices. Each vertex is denoted by an angle with
arrows either both entering the vertex or both leaving the vertex. Furthermore, the angle
locally divides the plane into two parts: one part is the span of an acute angle (of size
less than $\pi$); the other part is the span of an obtuse angle. We refer to the span of the
acute angle as the inside of the vertex, and label the insides of the
vertices with the symbol $\#$.

      In the case of classical knots,  the arrow polynomial~$\arbr{\cdot}$ can be reduced to the Kauffman bracket~$\kbr{\cdot}$.  However, in the case of virtual knots, the arrow polynomial~$\arbr{\cdot}$ takes values in linear combinations of circles with zigzags and is of particular interest.

      \insfigureasformula{}{$\begin{array}{l}
      \arbr{\crposem{0.4}}= A\cdot\arbr{\RSAKauf{0.4}}+A^{-1}\cdot\arbr{\RSBposi{0.4}}\\
        ~\\
         \arbr{\crnegem{0.4}}= A^{-1}\cdot\arbr{\RSAKauf{0.4}}+A\cdot\arbr{\RSBposi{0.4}}\\
        ~\\
               \arbr{\unknot\cup D}=(-A^2-A^{-2})\cdot \arbr{D}\\
                ~\\
               \arbr{\kupltori{0.4}}= \arbr{\kupltor{0.4}}\\
      \end{array}$}{Relations of the arrow polynomial~$\arbr{\cdot}$}{F_AP.jpg}


  \begin{teo}\label{Th_Ap}
   Let $D$ be an oriented virtual knot diagram, $w(D)$ be the  writhe of $D$, and $\arbr{D}$  be the the arrow polynomial of $D$. The normalised arrow polynomial~\cite{Kauffman}
     \begin{equation}\label{Eq_WAP}
     \narbr{D}=(-A)^{-3w(D)}\arbr{D}
       \end{equation}
      is a particular case of the label bracket $\lbr{D}$.
         \end{teo}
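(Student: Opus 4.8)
The plan is to follow the proof of Theorem~\ref{Th_Kp} almost verbatim; the one new ingredient is that the two \emph{disoriented} smoothings are not collapsed to scalars but are kept as the $\#$-marked zigzag vertices of $\arbr{\cdot}$. First I would rewrite the defining equation~\eqref{Eq_WAP} as a system of local relations, combining each skein relation of $\arbr{\cdot}$ from Fig.~\ref{F_AP.jpg} with the per-crossing writhe factor $(-A)^{\mp 3}$, exactly as $R_{JP.2}$ and $R_{JP.3}$ were produced from the Kauffman bracket in the proof of Theorem~\ref{Th_Kp}.

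Next I would specify the reduction sending $\lbr{\cdot}$ to $\narbr{\cdot}$, keeping the four coefficients of Theorem~\ref{Th_Kp}: an unoriented thin edge joining empty circles is replaced by $-A^{-2}$, an unoriented thin edge joining solid circles by $-A^{2}$, an oriented thin edge joining solid circles by $-A^{-4}$, and an oriented thin edge joining empty circles by $-A^{4}$. The decisive departure from the Jones reduction is that the final ``remove all orientation'' step is dropped: in the two oriented-thin-edge cases the pair of source/sink vertices is \emph{not} erased but retained as the two $\#$-cusps of a disoriented smoothing, with the arc orientations kept. Under this dictionary the coherent $A$-smoothing of a positive crossing and $B$-smoothing of a negative crossing become the plain Kauffman smoothing, while the disoriented smoothings become the $\#$-cusped smoothing, so that $R_{S.1}$ and $R_{S.2}$ reproduce the first two relations of Fig.~\ref{F_AP.jpg}.

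Then I would check the remaining relations. Exactly as in Theorem~\ref{Th_Kp}, the coherent second-Reidemeister relations $R_{2.1}$ and $R_{2.2}$ collapse after substitution to the coherent loop value $\arbr{\unknot\cup D}=(-A^{2}-A^{-2})\arbr{D}$. The genuinely new phenomenon is that the disoriented relations $R_{2.3}$ and $R_{2.4}$, which in the Jones case also merely reproduced the loop value once orientation was discarded, now retain their $\#$-cusps and instead yield the fourth relation of Fig.~\ref{F_AP.jpg}, the zigzag cancellation: a pair of consecutive oriented-thin-edge vertices is forced to collapse to a plain strand. Finally $R_{1.1}$ and $R_{1.2}$ become trivial once the writhe factor is present (they encode first-Reidemeister invariance, which $\narbr{\cdot}$ enjoys by construction), $R_{3.1}$ reduces to a trivial identity, and, since the statement concerns virtual diagrams, compatibility with the virtual and semivirtual relations of Fig.~\ref{F_vmove} and Fig.~\ref{F_Vlg.eps} is verified as for the virtual Kuperberg bracket, so that $\narbr{\cdot}$ is well defined on the module $V(G)$.

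The step I expect to be the main obstacle is precisely this zigzag cancellation. One must track the orientations of the $\#$-cusps through $R_{2.3}$ and $R_{2.4}$ and confirm that an adjacent pair really reduces to a plain strand, so that the retained disoriented vertices assemble into exactly the arrow-polynomial zigzag bookkeeping (where only the reduced parity of cusps survives) rather than into some finer graph-valued invariant. Everything else is a routine specialization of the proof of Theorem~\ref{Th_Kp}.
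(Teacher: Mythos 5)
Your proposal matches the paper's proof in all essentials: the writhe factor is absorbed into per-crossing relations $R_{W.1}$--$R_{W.2}$, the same four coefficients $-A^{\pm 2}$, $-A^{\pm 4}$ are assigned to the thin edges, the oriented thin edges are retained as $\#$-marked cusps rather than erased, and the loop value and cusp-cancellation relations are extracted from the second-Reidemeister relations (the paper happens to read both $R_{W.3}$ and $R_{W.4}$ off of $R_{2.3}$ alone, but this is only a bookkeeping difference). The step you flag as the main obstacle is exactly the one the paper settles by substituting the dictionary into $R_{2.3}$ and isolating the first term.
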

     \begin{proof}

     Taking into account the  relations of the   arrow polynomial $\arbr{D}$ given in Fig.~\ref{F_AP.jpg}, we represent formula~\eqref{Eq_WAP} as the  relations, see Fig.~\ref{F_WAP.jpg}.

 \insfigureasformula{}{$\begin{array}{l}
R_{W.1}: \narbr{\crposem{0.3}}= (-A)^{-3}\cdot\left(A\cdot\narbr{\RSAKauf{0.3}}+A^{-1}\cdot\narbr{\RSBposi{0.3}}\right)\\
        ~\\
        R_{W.2}: \narbr{\crnegem{0.3}}= (-A)^{3}\cdot\left(A^{-1}\cdot\narbr{\RSAKauf{0.3}}+A\cdot\narbr{\RSBposi{0.3}}\right)\\
        ~\\
          R_{W.3}~(the~loop~value~relation):     \narbr{\unknot\cup D}=(-A^2-A^{-2})\cdot \narbr{D}\\
                ~\\
            R_{W.4}~(the~reduction~relation):   \narbr{\kupltori{0.4}}= \narbr{\kupltor{0.4}}\\
\end{array}$}{Relations of the normalised arrow polynomial~$\narbr{\cdot}$}{F_WAP.jpg}


In view of the  relations given in Fig.~\ref{F_WAP.jpg}, we have to show that the normalised arrow polynomial~$\narbr{\cdot}$ is the partial case of the label bracket $\lbr{\cdot}$.

To this end, we perform the following simplifying procedures.

First, we remove all thin edges.

More precisely,

\begin{enumerate}
  \item[1)] each unoriented thin edge  incident to vertices denoted by empty circles (together with both these circles) is replaced with multiplication by $- A^{-2}$;
  \item[2)] each oriented thin edge  incident to vertices denoted by solid circles is replaced with multiplication by $- A^{-4}$, both small areas bounded by edges adjacent to the removed edge  are labeled with the symbol~$\#$, while  both endpoints of the removed edge are remained the same;
  \item[3)] each oriented thin edge  incident to vertices denoted by empty circles is replaced with multiplication by $-A^4$,  both small areas bounded by edges adjacent to the removed edge  are labeled with the symbol~$\#$, while  each endpoint of the removed edge is replaced with a solid circle;
  \item[4)]  each unoriented thin edge  incident to vertices denoted by solid circles (together with both these circles) is replaced with multiplication by $-A^{2}$.
\end{enumerate}

This gives rise to  relation $R_{W.1}$ in its form shown in Fig.~\ref{F_Rules_s12toROBE2.eps}. By analogy, relation $R_{W.2}$ can be obtained.


\insfigureasformula{h}{$\begin{array}{l}
R_{S.1}:\lbr{\RScrpos{0.5}}=\lbr{\RSApos{0.5}+\RSBpos{0.5}}\\
        ~\\
       Steps~1~-~4~give\\
        ~\\
          R_{W.1}: \narbr{\crposem{0.4}}= -A^{-2}\cdot\narbr{\RSAKauf{0.4}}-A^{-4}\cdot\narbr{\RSBposi{0.4}}.\\
\end{array}$}{Relation $R_{W.1}$ of the normalised arrow polynomial~$\narbr{\cdot}$ is obtained from relation $R_{S.1}$  of the label bracket $\lbr{\cdot}$}{F_Rules_s12toROBE2.eps}


Now, we are left to show that the loop value relation $R_{W.3}$ and the
reduction relation $R_{W.4}$  cancelling two consecutive cusps are indeed
{\em stronger} than the relations we are left after the simplifications made.

Indeed,  Fig.~\ref{F_Rules_fromR23.eps} shows that  relation $R_{2.3}$ gives the reduction relation  $R_{W.4}$ and the loop value relation $R_{W.3}$. 



Note that the value of a single circle can be imposed arbitrarily,
so we can choose it to be 1 as in the case of the arrow bracket.

Hence,  the normalised arrow polynomial~$\narbr{\cdot}$ is a specification of the label bracket $\lbr{\cdot}$.

\insfigureasformula{}{\mbox{\hspace*{0cm}$\begin{array}{l}
R_{2.3}: \lbr{\RSBArever{0.5}+\RSABrever{0.5}+\RSBBrever{0.5}+\RSAArever{0.5}} = \lbr{\RSemptyrev{0.5}}\\
        ~\\
       Steps~1~-~4~give\\
        ~\\
      {\RSBAreverar{0.5}+\RSABreverar{0.5}+A^{-2}\cdot\RSBBreverar{0.5}+A^{2}\cdot\RSAAreverar{0.5}} = {\RSemptyrev{0.5}}~.\\
        ~\\
       Therefore,~we~have~R_{W.4}~in~the~form \\
        ~\\
         \narbr{\RSBAreverar{0.5}} = \narbr{\RSemptyrev{0.5}},\\
        ~\\
        and,~taking~into~account~R_{W.4},~we~obtain~R_{W.3}~in~the~form\\
        ~\\
         \narbr{\RSABreverar{0.5}} = (-A^{2}-A^{-2})\cdot\narbr{\RSBBreverarem{0.5}}.\\
\end{array}$}}{The reduction  relation $R_{W.4}$ and the loop value relation $R_{W.3}$ of the normalised arrow polynomial~$\narbr{\cdot}$ are obtained from relation $R_{2.3}$  of the label bracket $\lbr{\cdot}$}{F_Rules_fromR23.eps}

     \end{proof}

      \section{The label bracket $\lbr{\cdot}$ for knotoids}\label{S_knotoids}

       In the present section, we shall show that the label bracket $\lbr{\cdot}$ defined in
Section~\ref{S_Def} for the case of classical knots can be defined literally in the same way for the case of knotoids. The reason is that knotoids,
as well as classical knots and virtual knots, are equivalence classes of some diagrams modulo
 Reidemeister moves.

        Knotoids were first introduced by Turaev~\cite{Turaev}. We shall distinguish between
two types of knotoids, the {\em planar ones} and the {\em spherical ones}.

        A {\em knotoid diagram} on the plane is an image of a generic immersion of
$[0,1]$ in $\mathbb{R}^{2}$ with all intersections being endowed with over/under crossing structure.

        For example, a knotoid with two crossings is shown in Fig.~\ref{F_exknotoid}.

        \insfigureasformula{h}{$$\exknotoid{.4}$$}{An example: a knotoid with two crossings}{F_exknotoid}

        Any knotoid diagram on the plane can be equivalently treated as a diagram in
$S^{2}$. Hence, we can perform Reidemeister moves either in $\mathbb{R}^{2}$
or in $S^{2}$.

        A {\em planar knotoid} is an equivalence class of knotoid diagrams modulo
Reidemeister moves in $\mathbb{R}^{2}$.

        A {\em spherical knotoid} is an equivalence class of knotoid diagrams modulo
Reidemeister moves in $S^{2}$. The main difference between these two classes
is the possibility of pulling the strand over the infinite point, see an example in Fig.~\ref{F_exknotoidinf}.  Therefore,  spherical knotoids can be considered as an equivalence class of planar knotoids modulo  the "pulling over infinity" move.

      \insfigureasformula{t}{$$\exknotoidinf{0.5}=\exknotoidinfm{0.5}$$}{An example: pulling the strand of a knotoid over the infinite point, which is marked by yellow}{F_exknotoidinf}

          Now, we are ready to define the label bracket~$\lbr{\cdot}$  for both planar and
spherical knotoids.



           The only difference with the above text will be that the graphs of the
label bracket~$\lbr{\cdot}$ are planar connected graphs having vertices of types given in  Fig.~\ref{F_Vertices.eps} and, in addition, two vertices of both types  given in  Fig.~\ref{F_Verticeskn.eps}.

\insfigureasformula{}{$\begin{array}{cc}
   \Veightkn{.33} &  \Vfivekn{0.33}\\
\end{array}$}{Additional types of vertices of knotoid  label graphs}{F_Verticeskn.eps}

However, let us give formal definitions.

By \emph{knotoid label graphs}  we mean  planar connected graphs having $2\cdot n$
3-valent vertices of types given in  Fig.~\ref{F_Vertices.eps}, where $n\in \mathbb{Z}^{+}$, and, in addition, two vertices of both types  given in  Fig.~\ref{F_Verticeskn.eps}. 


 Let $P(G)$ be 
 a module over $\mathbb{Z}$ generated by knotoid label graphs  
  modulo relations~\eqref{Eq_lb_rel}, and $S(G)$ be 
 a module over $\mathbb{Z}$ generated by knotoid label graphs  
  modulo relations~\eqref{Eq_lb_rel} and the "pulling over infinity" move.

  Let $D$ be an oriented knotoid diagram having $n$ classical crossings.
In order to smooth classical crossings of the diagram $D$, we follow the approach given on page~\pageref{D_sccr}. States of the diagram $D$ are defined in a way given on page~\pageref{D_state}.


      The \emph{label bracket~$\lpkbr{D}$ of  an oriented planar knotoid diagram $D$} is the sum~\eqref{Eq_lb_def}
        modulo relations~\eqref{Eq_lb_rel}. Therefore, if $D$ is  an oriented planar knotoid diagram, then the label bracket $\lskbr{D}$ takes
values in $P(G)$.

 The \emph{label bracket~$\lskbr{D}$ of  an oriented spherical knotoid diagram $D$} is the sum~\eqref{Eq_lb_def}
        modulo relations~\eqref{Eq_lb_rel} and the "pulling over infinity" move. Therefore, if $D$ is  an oriented spherical knotoid diagram, then the label bracket $\lskbr{D}$ takes
values in $S(G)$. 

The sum~\eqref{Eq_lb_def} of knotoid label graphs is taken over all possible states of the~diagram~$D$. Here $s$ is a state of the diagram $D$, and $G_s(D)$ is a knotoid label graph, which is obtained as a result of smoothing each classical crossing of $D$ according to the state $s$ by the  smoothing relations $R_{S.1}$ and $R_{S.2}$ given in Fig.~\ref{F_Rules_s12.eps}.



         \begin{teo}\label{Th_inv_plkn}
     The label bracket $\lpkbr{\cdot}$ is  invariant under isotopy of  planar knotoids.
     \end{teo}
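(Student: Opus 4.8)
The plan is to reduce the statement to Theorem~\ref{Th_inv}. A planar knotoid is an equivalence class of knotoid diagrams modulo the three Reidemeister moves performed in $\mathbb{R}^{2}$, subject to the restriction that no strand may be pulled across either endpoint. First I would observe that the label bracket $\lpkbr{D}$ is built by exactly the same recipe as in the classical case: every classical crossing of $D$ is smoothed according to $R_{S.1}$ and $R_{S.2}$ (Fig.~\ref{F_Rules_s12.eps}), the sum~\eqref{Eq_lb_def} is taken over all $2^{n}$ states, and the result is read modulo the relations~\eqref{Eq_lb_rel}. The only new ingredient is the two endpoint vertices of types $\Veightkn{0.2}$ and $\Vfivekn{0.2}$ shown in Fig.~\ref{F_Verticeskn.eps}. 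Since these vertices are not crossings, they are never touched by the smoothing procedure; they are simply carried along as passive univalent decorations of the resulting graphs, and they do not occur in any of the relations~\eqref{Eq_lb_rel}.

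The key step is then the following: each Reidemeister move on a knotoid diagram is, by definition, supported in a small disc that contains no endpoint. Hence the local tangle replacement effected by any such move is literally one of the moves already analysed for classical links, and the corresponding verification is the very computation recorded in Figs.~\ref{F_R1.eps}--\ref{F_R3.eps}. Because the endpoint vertices lie outside the disc of the move, they are identical on both sides of every equation and can be ignored, so no equation from the classical proof needs to be redone. As in Theorem~\ref{Th_inv}, it then suffices to check the finitely many orientation variants of the three Reidemeister moves, all of which are covered by the figures already cited. This establishes that $\lpkbr{D}$, valued in $P(G)$, is unchanged by each Reidemeister move in $\mathbb{R}^{2}$.

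The one point requiring care --- and the place where knotoid invariance could in principle differ from the knot case --- is that for knotoids certain moves are forbidden, namely pulling a strand over or under an endpoint. I would emphasise that forbidding moves only shrinks the equivalence relation, so it can never destroy an invariance already proved: every move allowed for planar knotoids is among the moves verified in the classical argument. Consequently there is strictly less to check, not more, and no genuinely new obstacle arises. The main (though routine) remaining task is to confirm that the endpoint vertices are compatible with the definition of $P(G)$, i.e. that adjoining the vertex types of Fig.~\ref{F_Verticeskn.eps} to the generating set forces no new relation; this is immediate, since the smoothing relations and~\eqref{Eq_lb_rel} involve only the trivalent vertices of Fig.~\ref{F_Vertices.eps} together with thin segments, and never the endpoints.
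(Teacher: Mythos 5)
Your argument is correct and follows the same route as the paper: the paper's proof simply cites the verifications in Figs.~\ref{F_R1.eps}--\ref{F_R3.eps} for the three classical Reidemeister moves (with the standard remark that the checked orientation variants suffice), exactly as you do. Your additional observations --- that the endpoint vertices lie outside the disc of any move and that forbidding moves can only make invariance easier --- are sound elaborations of what the paper leaves implicit.
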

     \begin{proof}
     Figs.~\ref{F_R1.eps}~--~\ref{F_R3.eps} show that the label bracket $\lpkbr{\cdot}$ is  invariant under all three classical Reidemeister moves. It is well known that the considered variants of orientations are sufficient.
      \end{proof}

       \begin{teo}\label{Th_inv_skn}
     The label bracket $\lskbr{\cdot}$ is  invariant under isotopy of  spherical knotoids.
     \end{teo}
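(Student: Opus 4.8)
The plan is to reduce the statement to two independent verifications, exploiting that the spherical module $S(G)$ is by construction a quotient of the planar module $P(G)$ by the single additional ``pulling over infinity'' relation. First I would observe that invariance under the three classical Reidemeister moves is \emph{inherited verbatim} from Theorem~\ref{Th_inv_plkn}: the move-invariance established there is an equality of sums of the form~\eqref{Eq_lb_def} modulo the relations~\eqref{Eq_lb_rel}, witnessed by Figs.~\ref{F_R1.eps}--\ref{F_R3.eps}. Since $S(G)$ is a quotient of $P(G)$, every such equality that already holds in $P(G)$ continues to hold in $S(G)$ after applying the quotient map. Thus the only genuinely new content is invariance under the ``pulling over infinity'' move, illustrated in Fig.~\ref{F_exknotoidinf}.

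For this second part, the key point is that smoothing is a purely local operation performed at the classical crossings, whereas the ``pulling over infinity'' move sweeps an arc of the diagram across the point at infinity. I would first use planar isotopy together with the classical Reidemeister moves---already shown to preserve $\lskbr{\cdot}$ in the paragraph above---to arrange that the arc being pulled across infinity carries no classical crossing. Once this is done, that arc appears identically in every state graph $G_s(D)$: having no crossing on it means it is untouched by the choice of smoothing at the remaining crossings. Consequently there is a canonical bijection between the states of $D$ and the states of the diagram $D'$ obtained after the move, and under this bijection the paired state graphs $G_s(D)$ and $G_s(D')$ differ precisely by one application of the graph-level ``pulling over infinity'' move.

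The conclusion then follows term by term: since $S(G)$ is defined modulo exactly this graph-level move, we have $G_s(D) = G_s(D')$ in $S(G)$ for each state $s$, and summing over $s$ in~\eqref{Eq_lb_def} gives $\lskbr{D} = \lskbr{D'}$, which is the desired invariance under the ``pulling over infinity'' move. Combined with the first part, this establishes invariance under all equivalences generating spherical knotoids.

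The main obstacle I anticipate is the localisation step, namely justifying rigorously that one may always push the classical crossings off the arc to be swept over infinity while staying within equivalences that have already been proved to preserve $\lskbr{\cdot}$. One must check that this is compatible with the endpoint structure of a knotoid---in particular with the two extra vertex types of Fig.~\ref{F_Verticeskn.eps} that encode the free ends---and with the orientation bookkeeping at the thin segments, so that the swept strand really does appear in the same oriented form in each state graph. Handling the endpoints carefully, so that the arc pulled over infinity does not contain the knotoid's terminal vertices in a way that obstructs the clean state-by-state bijection, is where the argument must be made precise.
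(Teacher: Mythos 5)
Your proposal is correct and follows essentially the same route as the paper: invariance under the classical Reidemeister moves is read off from Figs.~\ref{F_R1.eps}--\ref{F_R3.eps} (equivalently, inherited from the planar case through the quotient $P(G)\to S(G)$), and invariance under the ``pulling over infinity'' move holds because $S(G)$ is defined modulo exactly that move. The paper simply declares this second step ``obvious,'' whereas you spell out the state-by-state correspondence; your extra localisation step is a reasonable precaution but not a departure from the paper's argument.
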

     \begin{proof}
     Figs.~\ref{F_R1.eps}~--~\ref{F_R3.eps} show that the label bracket $\lskbr{\cdot}$ is  invariant under all three classical Reidemeister moves. It is well known that the considered variants of orientations are sufficient. The invariance under  the "pulling over infinity" move is obvious.
      \end{proof}

\end{document}